\definecolor{mygray}{gray}{0.6}
\newtheorem{teorema}{Theorem}[section]
\newtheorem{lema}[teorema]{Lemma}
\newtheorem{defi}[teorema]{Definition}
\newtheorem{propo}[teorema]{Proposition}
\newtheorem{obs}[teorema]{Remark}
\newcommand\D{\displaystyle}
\providecommand{\keywords}[1]{\noindent {\textbf{Keywords:}} #1}
\providecommand{\AMScodes}[1]{\noindent {\textbf{MSC2010:}} #1}
\begin{document}

\title{{\bf Distribution function of the blow up time of the solution of an anticipating random fatigue equation}}

\author{Liliana Peralta \thanks{Centro de Investigaci\'on en Matem\'aticas, UAEH, Carretera Pachuca-Tulancingo km 4.5 Pachuca, Hidalgo 42184, Mexico  E-mail: \texttt{liliana\_peralta@uaeh.edu.mx}}
%Centro de Investigaci?on en Matem?aticas, UAEH,
%Carretera Pachuca-Tulancingo km 4.5 Pachuca,
%Hidalgo 42184, Mexico
%liliana peralta@uaeh.edu.mx
%\and Author 2\thanks{aqui las del segundo. E-mail: \texttt{perenganito@perez.com}}
}

\maketitle

\abstract{In this paper, we study the distribution function of the time of explosion of a stochastic differential equation modeling the length of the dominant crack due to fatigue. The main novelty is that initial condition is regarded as an anticipating random variable and the stochastic integral is in the forward sense.

Under suitable conditions, we use the substitution formula from Russo and Vallois to find the local solution of this equation. Then, we find the law of blow up time by proving some results on barrier crossing probabilities of Brownian bridge.}

\

\keywords{Stochastic differential equations, random fatigue, explosion time, forward integral, Brownian bridge.}

\AMScodes{60H10, 91B70, 60H05, 60J65}

\section{Introduction}
Nowadays, most of the components of certain types of structures present discontinuities or notches, which usually appear by the manufacturing process and operating conditions. With time and due to cyclic loads, these discontinuities cause the appearance of cracks, and in turn generate high concentration of efforts that could produce failures in the structures, whence, the rate of propagation of a crack varies in time. 

Over time, different models have been developed to predict the propagation of cracks, relating the properties of the material with geometric characteristics and different loading conditions. In 1963  the well known Paris' law \cite{paris} of fracture mechanics is introduced and, since then, several generalizations of this model have been made. As experiments have shown, the randomness is a characteristic feature of crack growth and stochastic components have been incorporated  to the models to take into account the typical variability of the dynamics of cracks on solids. 

The following stochastic model%
\begin{equation}
\label{PE}
dL_t=\left(c_1L_s^{p}+\frac{p c_2^2}{2}L_s^{2p-1}\right)dt+\int_0^t c_2L_s^{p}dW_t,\qquad L_0=l_0,
\end{equation}
is a generalization of the classical Paris' law and it was proposed by Sobczyk (\cite{sobc2}, \cite{rfsobc}) to represent the time evolution of the length of the dominant crack.  Here $W$ is a Brownian motion defined in a filtered probability space $(\Omega, \mathcal{F},\{\mathcal{F}_t\}_{t\geq 0}, P)$ which filtration satisfies the usual conditions, while $c_1$ and $c_2$ are positive constants depending on various parameters used to describe the effects of random loading on crack growth, in particular the intensity factor range and ratio of the applied stress. The variable $p$ is a positive constant and it is determined from experimental data (see, for instance \cite{frost}, \cite{valluri}). When $p=1$, the coefficients of equation (\ref{PE}) have linear growth and thus its solution is global, i.e., it is well defined for all $t\geq 0$. Nevertheless, it has been found that several materials have values of $p$ greater than one (see \cite{molenta}), varying according to the type of material, the environmental conditions and the type of load applied. In this case, the solution $L$ may explode in finite time (see \cite{arnold}, \cite{mck}) which, at the mechanical level, translates into the precise instant where the fracture occurs.   

The point of the fracture is a relevant topic and has attracted attention in the recent past due to its numerous applications. For instance in \cite{ford}, the authors investigate the mechanisms of fatigue crack growth in polycarbonate polyurethane, a material widely used in orthopedic applications due to its cartilage-like, hygroscopic, and elastomeric properties. Other interesting applications can be found in the fields of mechanics, aviation, shipping. We refer the interested reader to the works \cite{augustin}, \cite{chamis}, \cite{lin}.

In the present paper, we propose to find the distribution of the blow up time of the solution of process \eqref{PE} when $p>1$ but, different from \cite{sobc2} and other related works, we replace the deterministic initial condition $l_0$ by a more general anticipating random variable $I$. As a consequence, it will be necessary to use tools of anticipating stochastic calculus (see, for instance \cite{Arturo}, \cite{nualart}, \cite{pardoux}) in order to achieve the goal of the work. The anticipating stochastic calculus allows the study of stochastic differential equations (SDE) where the coefficients or the initial condition may depend on future information. 
%In the case of random fatigue this means that the length of the dominant crack at a certain time $T>0$ is already known when the model is seen at time $t=0$. 

The stochastic integral in equation \eqref{PE} is understood in the classical It\^o sense (see, e.g. \cite{ito}) and is defined for adapted processes to the information generated by the Brownian motion $W$, but in this proposal, taking into account the new characteristic of the initial condition, we need to use an integral which allows the integration of non-adapted processes. The anticipating stochastic integral known as forward, defined by Russo and Vallois in \cite{RV}, is an extension of the well known It\^o's integral for the case of anticipating integrands and coincides with the It\^o's one when the integrands are square integrable, measurable and $\mathcal{F}_t-$adapted processes.

To achieve the objective of this paper, first in Section \ref{AC}, we will be prove using the substitution formula for the forward integral \cite{RV}, that the local solution of the anticipating stochastic equation is given as follows:

\begin{teorema}
\label{solteo}
Let $\bar{t}>0$ and $I>0$ a random variable. Then the process 
\begin{equation*}
\left(\left(I\right)^{1-p}\!\!\!-c_1(p-1)_{.}-c_2(p-1)W_{.}\right)^{\frac{p}{1-p}}
\end{equation*}
belongs to $\text{Dom } \delta_{\bar{t}, loc}^{-}$ (see Definition \ref{local}) on the set
\begin{equation*}
A_{\bar{t}}=\left\{ \left(I\right)^{1-p}\!\!\!-c_1(p-1)\bar{t}-c_2(p-1)\sup_{0\leq s\leq \bar{t}}W_s>0\right\}
\end{equation*}
for any $t\in(0,\bar{t}\:]$. Furthermore, for $t\leq \bar{t}$, $p>1$, and $I>0$ we have that
\begin{equation}
\label{SE}
L^{I}_t=I+\int_0^t\left(c_1\left(L^{I}_s\right)^p+\frac{pc_2^2}{2}\left(L_s^{I}\right)^{2p-1}\right)ds+\int_0^tc_2\left(L_s^{I}\right)^pd^-W_s
\end{equation}
%f
is well defined on $A_{\bar{t}}$, where 
\begin{equation}
\label{solparis}
L^x_t=\left(\left(\frac{1}{x}\right)^{p-1}-c_1(p-1)t-c_2(p-1)W_t\right)^{\frac{1}{1-p}},\hspace{.4cm}t\geq 0, \:x\in\mathbb{R}.
\end{equation}
\end{teorema}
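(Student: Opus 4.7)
The plan is to first solve equation (\ref{PE}) with a \emph{deterministic} positive initial condition $x$, obtaining the explicit formula $L^x_t$ from (\ref{solparis}), and then apply the Russo--Vallois substitution formula to replace the parameter $x$ by the anticipating random variable $I$. The set $A_{\bar{t}}$ should arise naturally as the random event on which this substitution keeps the base of the fractional power strictly positive throughout $[0,\bar{t}\,]$.

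First I would fix $x>0$ and write $L^x_t = f(t,W_t)$ with $f(t,w)=(x^{1-p}-c_1(p-1)t-c_2(p-1)w)^{1/(1-p)}$, which is smooth on the open set where the base is positive. A direct application of It\^o's formula then shows that $L^x_t$ satisfies (\ref{PE}) in the classical sense until its blow-up time: the drift of (\ref{PE}) is reproduced as $f_t+\tfrac12 c_2^2 f_{ww}$, the term $\tfrac{pc_2^2}{2}L^{2p-1}$ being precisely the second-order correction. Since $L^x_s$ is then adapted, the forward and It\^o integrals of $c_2(L^x_s)^p$ coincide on the corresponding event.

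Next I would regard the integrand as a parametric family $u(s,x)=c_2(L^x_s)^p=c_2\bigl(x^{1-p}-c_1(p-1)s-c_2(p-1)W_s\bigr)^{p/(1-p)}$. On compact subsets of the open positivity set --- and, crucially, on $A_{\bar{t}}$, where the minimum of the base over $s\le\bar{t}$ is strictly positive thanks to the $\sup_{s\le\bar{t}}W_s$ in its definition --- the map $x\mapsto u(\cdot,x)$ is of class $C^1$ with uniform bounds. This is exactly the framework of the substitution formula of Russo--Vallois, whose application should deliver both the membership of $(L^I_s)^p$ in $\mathrm{Dom}\,\delta^-_{\bar{t},\mathrm{loc}}$ (in the sense of Definition \ref{local}) and the identity
\begin{equation*}
\int_0^t c_2(L^I_s)^p\,d^-W_s \;=\; \Bigl(\int_0^t c_2(L^x_s)^p\,dW_s\Bigr)\Big|_{x=I}\qquad\text{on }A_{\bar{t}}.
\end{equation*}
Combining this with the pathwise substitution $x=I$ in the Lebesgue drift and in the identity established in Step 1 yields (\ref{SE}).

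The main obstacle is the localisation on $A_{\bar{t}}$. Because $p>1$, the map $x\mapsto L^x_t$ only makes sense where the radicand is positive, so I would have to construct an exhausting sequence of random compacta inside $A_{\bar{t}}$ on which $u(\cdot,x)$ and its $x$-derivatives satisfy the continuity and integrability hypotheses demanded by the Russo--Vallois substitution theorem, and then check that the resulting localised integrands lie in $\mathrm{Dom}\,\delta^-$ in the precise sense prescribed by Definition \ref{local}. The It\^o computation of Step 1 is routine, and the pathwise substitution in the drift is straightforward; the real bookkeeping sits in this parametric regularity/localisation step.
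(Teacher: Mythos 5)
Your proposal is correct and follows essentially the same route as the paper: solve the deterministic-parameter equation by It\^o's formula, apply the Russo--Vallois substitution formula, and localise on $A_{\bar{t}}$. The paper realises the localisation step you describe by truncating the integrand with smooth cutoff functions $\varphi_m$ (so that the random field globally satisfies the hypotheses of the substitution theorem) and taking $A_m$ to be the events where the truncation is inactive, with $A_m \nearrow A_{\bar{t}}$ --- exactly the exhausting sequence you anticipated.
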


Since we are considering the case $p>1$, notice that \eqref{solparis} might have paths that reach infinite values in finite times. The first instant in which this phenomenon occurs is called the time of the explosion of the process and hereinafter we will denote it by $\tau$.

In Section \ref{LD}, when the initial condition for \eqref{SE} is given by $I=g(W_T)$ for a borel function $g:\mathbb{R}\to (0,\infty)$, we will compute the probability distribution of the time of explosion of the solution of process \eqref{SE}. To find this distribution, we will use results on barrier crossing probabilities of Brownian bridge (see e. g. \cite{anna},\cite{revuz}), namely, we will prove conditional crossing-probability for a linear boundary of the form
\begin{equation}
\label{browbridg}
\D P\left(\bigcup_{0\leq t \leq r}\left\{W_t\geq a-bt\right\}| W_T=x\right).
\end{equation}
The barrier crossing probabilities that we will prove are a generalization of those found in \cite{beg} and they have been extended for the case of piecewise-linear boundary in \cite{abundo}. For the sake of conciseness, we will present proofs that are useful and adapted for our purposes. Since the moment of the fracture may occur before or after the time $T$ we pay attention to the cases $r\leq T$ and $r> T$ in the probability \eqref{browbridg}.

With all of the above, we find that the distribution function of the explosion time of the solution of \eqref{SE} is:

\begin{teorema}
\label{FDdetau}
Let $\Phi$ be the standard normal distribution function, $\phi(0,\sigma^2)$ be the normal density function with mean zero and variance $\sigma^2$ and $\tau$ be the blow up time of solution of the anticipating process \eqref{SE} given by
\begin{equation*}
\tau=\inf\left\{t>0: W_t\geq \frac{1}{c_2(p-1)I^{p-1}}-\frac{c_1}{c_2}t\right\}.
\end{equation*}
Define 
\begin{align}\notag
a(x)&=\frac{1}{c_2(p-1)\left(g(x)\right)^{p-1}}-x-\frac{c_1T}{c_2}\text{   and   }v=\frac{1}{r}-\frac{1}{T}.
\end{align}
Then
\begin{description}
\item[i)] If $r<T$,  
\begin{align}\notag
P&(\tau\leq r)\\\notag
&=1-\int_{-\infty}^{\infty}\Phi\left(\frac{a(x)}{T\sqrt{v}}+\frac{\sqrt{v}}{c_2(p-1)\left(g(x)\right)^{p-1}}\right)\phi(0,T)dx\\\notag
&+\int_{-\infty}^{\infty}\exp\left(\frac{-2a(x)}{Tc_2(p-1)\left(g(x)\right)^{p-1}}\right)\Phi\left(\frac{a(x)}{T\sqrt{v}}-\frac{\sqrt{v}}{c_2(p-1)\left(g(x)\right)^{p-1}}\right)\phi(0,T)dx.
\end{align}
\item[ii)] If $r=T$,
\begin{align}\notag
P(\tau\leq r)=&1-\int_{-\infty}^{\infty}I_{\{a(x)>0\}}\phi(0,T)dx\\\notag
&+\int_{-\infty}^{\infty}\exp\left(\frac{-2a(x)}{Tc_2(p-1)\left(g(x)\right)^{p-1}}\right)I_{\{a(x)>0\}}\phi(0,T)dx.\\\notag
\end{align}
\item[iii)] If $r>T$,
\begin{equation*}
\begin{split}
&P(\tau\leq r)\\
&=\int_{-\infty}^{\infty}\exp\left(\frac{-2a(x)}{Tc_2(p-1)\left(g(x)\right)^{p-1}}\right)\Phi\left(\frac{a(x)}{\sqrt{r-T}}-\frac{c_1\sqrt{r-T}}{c_2}\right)I_{\{a(x)>0\}}\phi(0,T)dx\\
&\quad-\int_{-\infty}^{\infty}\exp\left(\frac{-2a(x)}{Tc_2(p-1)\left(g(x)\right)^{p-1}}+\frac{2a(x)c_1}{c_2}\right)\\
&\hspace{4.7cm}\times\Phi\left(\frac{-a(x)}{\sqrt{r-T}}-\frac{c_1\sqrt{r-T}}{c_2}\right)I_{\{a(x)>0\}}\phi(0,T)dx\\
&\quad+1-\int_{-\infty}^{\infty}\Phi\left(\frac{a(x)}{\sqrt{r-T}}-\frac{c_1\sqrt{r-T}}{c_2}\right)I_{\{a(x)>0\}}\phi(0,T)dx\\
&\quad+\int_{-\infty}^{\infty}\exp\left(\frac{2a(x)c_1}{c_2}\right)\Phi\left(\frac{-a(x)}{\sqrt{r-T}}-\frac{c_1\sqrt{r-T}}{c_2}\right)I_{\{a(x)>0\}}\phi(0,T)dx.
\end{split}
\end{equation*}
\end{description}
\end{teorema}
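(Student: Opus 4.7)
The plan is to realise $\tau$ as the first passage of $W$ to the random line $\alpha(W_T)-bt$, where $\alpha(x):=1/(c_2(p-1)g(x)^{p-1})$ and $b:=c_1/c_2$, and then to compute its distribution by conditioning on $W_T$. From the explicit form (\ref{solparis}), $L^I_t$ explodes iff $I^{1-p}-c_1(p-1)t-c_2(p-1)W_t$ reaches $0$, which rearranges into the expression for $\tau$ stated in the theorem. Observe also that $a(x)=\alpha(x)-x-c_1T/c_2$ is the signed gap between the barrier at time $T$ and the value $W_T=x$, so $\{a(x)\leq 0\}$ is precisely the event that the barrier has already been crossed by time $T$.

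Conditioning on $W_T$ via
\begin{equation*}
P(\tau\leq r)=\int_{-\infty}^{\infty} P(\tau\leq r\mid W_T=x)\,\phi(0,T)\,dx
\end{equation*}
converts the anticipating problem into a family of deterministic barrier-crossing problems: under this conditioning $\{W_t\}_{0\leq t\leq T}$ is a Brownian bridge from $0$ to $x$, while by the Markov property $\{W_{T+s}-x\}_{s\geq 0}$ is an independent standard Brownian motion.

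For $r\leq T$ the event $\{\tau\leq r\}$ depends only on the bridge portion of the trajectory, so I would apply directly the conditional Brownian-bridge crossing probability (\ref{browbridg}) established earlier in Section \ref{LD}, with $a=\alpha(x)$ and $b=c_1/c_2$. Rewriting the result in terms of $a(x)$ and $v=1/r-1/T$ and taking the complement yields (i). Case (ii) then follows either as the limit $v\to 0^+$ of (i), in which both $\Phi$-terms collapse to $I_{\{a(x)>0\}}$, or directly from the classical reflection identity for the bridge, which gives crossing probability $\exp(-2\alpha(x)a(x)/T)$ on $\{a(x)>0\}$ and $1$ on $\{a(x)\leq 0\}$.

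For $r>T$ I would write $\{\tau\leq r\}$ as the disjoint union of $\{\tau\leq T\}$ and $\{T<\tau\leq r\}$ and exploit the conditional independence, given $W_T=x$, of the pre-$T$ bridge and the post-$T$ Brownian motion. The first event contributes $E(x):=\exp(-2a(x)/(Tc_2(p-1)g(x)^{p-1}))$ on $\{a(x)>0\}$ (and $1$ on $\{a(x)\leq 0\}$) by the previous step. On $\{a(x)>0\}$, the second event is controlled by the first-passage probability of standard Brownian motion started at $0$ to the sloping line $a(x)-bs$ over $s\in[0,r-T]$, which by Girsanov or the reflection principle equals $1-\Phi(a(x)/\sqrt{r-T}-b\sqrt{r-T})+e^{2a(x)b}\Phi(-a(x)/\sqrt{r-T}-b\sqrt{r-T})$. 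Denoting this quantity $P'(x)$, the conditional probability on $\{a(x)>0\}$ becomes $E(x)+(1-E(x))P'(x)$; integrating against $\phi(0,T)$ and adding the $\{a(x)\leq 0\}$ contribution yields (iii) after expanding and regrouping. The main obstacle is precisely this last algebraic bookkeeping: rearranging the $\{a(x)\leq 0\}$ part into the standalone ``$+1$'', keeping the $I_{\{a(x)>0\}}$ indicators on the correct summands, and matching the signs of the four cross-terms arising from $(1-E(x))P'(x)$ against the exact form stated in (iii).
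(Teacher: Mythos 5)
Your proposal is correct and follows essentially the same route as the paper: condition on $W_T$ so that $P(\tau\leq r)=\int P(\tau\leq r\mid W_T=x)\,\phi(0,T)\,dx$, handle $r<T$ with the conditional bridge-crossing formula of Theorem \ref{S/A}, handle $r>T$ by splitting at time $T$ and using the conditional independence of the pre-$T$ bridge and the post-$T$ Brownian motion (your identity $E(x)+(1-E(x))P'(x)$ is exactly the inclusion--exclusion $P(A)+P(B)-P(A)P(B)$ used in Theorem \ref{teotr}), and obtain $r=T$ as a limit. The algebraic regrouping you flag does go through and reproduces the stated formula in case \textbf{iii)}.
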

%%%%%%%%%%%%%%%%%%%%%%%%%%%%%%%%%%%%%%%%%%%%%%%%%%
%%%%%%%%%%%%%%%%%%%% p r e l i m i n a r e s %%%%%%%%%%%%%%%%%%%%%%%%%
%%%%%%%%%%%%%%%%%%%%%%%%%%%%%%%%%%%%%%%%%%%%%%%%%%%%
\section{The anticipating case}
\label{AC}
In this section we find the solution of equation \eqref{SE} in which the involved stochastic process
\begin{equation*}
\label{FI}
\left\{\int_0^tc_2\left(L_s^{I}\right)^pd^-W_s,\right\}_{t\geq0}
\end{equation*}
is a forward integral. The forward integral was introduced by Russo and Vallois in their seminal work \cite{RV} and is an extension of It\^o's one. We present its definition.

\begin{defi}
Let $T>0$ and $v$ be a measurable process with integrable trajectories. We say that $v$ is forward integrable (i.e. $v\in  \text{Dom } \delta_{T}^{-}$) if
\begin{equation*}
\frac{1}{\epsilon}\int_0^{T}v_s(W_{(s+\epsilon)\wedge T}-W_s)ds,
\end{equation*}
converges in probability as $\epsilon \downarrow 0$. We denote this limit by $\int_0^Tv_sd_s^{-}W_s$.
\end{defi}

In order to achieve our goal, we will use the so-called substitution formula for the forward integral (see, for instance, \cite{navarro}, \cite{RV}), which we enunciate below.   

To this end let $\mathcal{R}$ be the set of random fields $X=\{X_t(u): t\in[0,T], u\in\mathbb{R}\}$ which are $\mathcal{P}\otimes \mathcal{B}(\mathbb{R})-$measurables, where $\mathcal{P}$ is the $\sigma-$algebra generated by the previsible processes. We have the following: 

\begin{teorema}\label{forsubs}
Consider the class of processes 
\begin{equation*}
\begin{split}
\mathcal{R}_2=&\{ X \in \mathcal{R} :  X(0) \in L^2([0,T]),\hspace{.2cm}X_{t}(u)\hspace{.2cm}\hbox{is differentiable in}\\
&u\hspace{.2cm}and\hspace{.2cm}\int_{-n}^{n}\int_{0}^{T}X^{\prime}_{t}(u)^2dtdu<\infty \hspace{.2cm}\forall n\in \mathbb{N}\}.
\end{split}
\end{equation*}
If $X\in \mathcal{R}_2$, then for every random variable $Z$, $X(Z)\in \text{Dom } \delta_{T}^{-}$  and 
\begin{equation}
\label{subs}
\int_{0}^{T}X_{s}(Z)d^{-}W_s=\left( \int_{0}^{T}X_{s}(u)dW_s\right)_{u=Z}.
\end{equation}
\end{teorema}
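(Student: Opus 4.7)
The plan is to start from the definition of the forward integral and pass the random substitution $u=Z$ through the approximating limit. Set
\begin{equation*}
J_\epsilon(u) := \frac{1}{\epsilon}\int_0^T X_s(u)\bigl(W_{(s+\epsilon)\wedge T}-W_s\bigr)\,ds, \qquad J(u) := \int_0^T X_s(u)\,dW_s.
\end{equation*}
Since $X$ is jointly measurable, the pathwise identity $J_\epsilon(Z)(\omega)=\frac{1}{\epsilon}\int_0^T X_s(Z(\omega))(W_{(s+\epsilon)\wedge T}-W_s)\,ds$ holds, so everything reduces to showing $J_\epsilon(Z)\to J(Z)$ in probability.

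First I would check pointwise convergence $J_\epsilon(u)\to J(u)$ in probability for each fixed $u\in\mathbb{R}$. The $\mathcal{P}\otimes\mathcal{B}(\mathbb{R})$-measurability of $X$ makes $s\mapsto X_s(u)$ previsible, hence $\mathcal{F}_s$-adapted. Writing $X_s(u)=X_s(0)+\int_0^u X'_s(w)\,dw$ and combining the hypotheses $X(0)\in L^2([0,T])$ and $\int_{-n}^n\int_0^T X'_s(w)^2\,ds\,dw<\infty$ places $X_\cdot(u)$ in $L^2([0,T])$ pathwise, so $J(u)$ is a well-defined It\^o integral. The standard agreement of the forward and It\^o integrals for adapted, square-integrable integrands then gives $J_\epsilon(u)\to J(u)$ in probability at every fixed $u$.

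The core of the argument is an equicontinuity estimate in $u$, uniform in $\epsilon$. Cauchy-Schwarz applied to $X_s(u)-X_s(v)=\int_v^u X'_s(w)\,dw$ yields
\begin{equation*}
|X_s(u)-X_s(v)|^2 \leq |u-v|\int_{-n}^n X'_s(w)^2\,dw.
\end{equation*}
Combining this with the It\^o isometry for $J(u)-J(v)$, and with a direct second-moment computation for $J_\epsilon(u)-J_\epsilon(v)$ in which a Cauchy-Schwarz in $s$ together with $E[(W_{(s+\epsilon)\wedge T}-W_s)^2]\leq\epsilon$ neutralises the $1/\epsilon$ prefactor, produces a common linear bound $E|J_\epsilon(u)-J_\epsilon(v)|^2+E|J(u)-J(v)|^2\leq C_n|u-v|$ after a localisation that converts the pathwise $\mathcal{R}_2$ hypothesis into moment estimates. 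Passing to higher even moments via the BDG inequality and invoking Kolmogorov's continuity criterion yields continuous versions of $u\mapsto J_\epsilon(u)$ and $u\mapsto J(u)$ on each interval $[-n,n]$, with a modulus of continuity independent of $\epsilon$.

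Equicontinuity together with pointwise convergence in probability on a countable dense subset of $u$'s then gives $\sup_{|u|\leq n}|J_\epsilon(u)-J(u)|\to 0$ in probability. Writing $\Omega=\bigcup_n\{|Z|\leq n\}$ and applying this uniform convergence on each $[-n,n]$ delivers $J_\epsilon(Z)\to J(Z)$ in probability, which is exactly the substitution formula. I expect the main obstacle to be the uniform-in-$\epsilon$ modulus of $J_\epsilon$: the It\^o side is automatic from the isometry, but the $1/\epsilon^2$ prefactor in $|J_\epsilon(u)-J_\epsilon(v)|^2$ must be balanced against the Brownian variance $\epsilon$ by a careful double Cauchy-Schwarz, and the pathwise (rather than $L^2(\Omega)$) phrasing of the derivative hypothesis in $\mathcal{R}_2$ forces a localisation step to reach genuine moment bounds.
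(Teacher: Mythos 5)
The paper does not prove this theorem at all: it is imported verbatim from Russo--Vallois \cite{RV} and Navarro \cite{navarro}, so there is no in-paper argument to compare against. Your architecture --- pointwise convergence $J_\epsilon(u)\to J(u)$ for fixed $u$, a Kolmogorov-criterion equicontinuity bound uniform in $\epsilon$, uniform convergence on compacts, then substitution of $Z$ via $\Omega=\bigcup_n\{|Z|\leq n\}$ --- is exactly the standard route used in those references, and the reduction of the $u$-increment to the derivative hypothesis via $X_s(u)-X_s(v)=\int_v^u X_s'(w)\,dw$ and Cauchy--Schwarz is the right first step.

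However, the stated mechanism for the key uniform-in-$\epsilon$ estimate does not close as written. The prefactor in $|J_\epsilon(u)-J_\epsilon(v)|^2$ is $\epsilon^{-2}$, and a Cauchy--Schwarz in $s$ combined with the single factor $E[(W_{(s+\epsilon)\wedge T}-W_s)^2]\leq\epsilon$ leaves a residual $\epsilon^{-1}$, which blows up; one factor of $\epsilon$ cannot neutralise $\epsilon^{-2}$. The missing ingredient is the adaptedness of $X_\cdot(u)$ (which you correctly note follows from $\mathcal{P}\otimes\mathcal{B}(\mathbb{R})$-measurability) used \emph{structurally}, not just to define the It\^o integral. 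The clean repair is a stochastic Fubini argument: writing $W_{(s+\epsilon)\wedge T}-W_s=\int_s^{(s+\epsilon)\wedge T}dW_r$ and exchanging the order of integration gives
\begin{equation*}
J_\epsilon(u)=\int_0^T\left(\frac{1}{\epsilon}\int_{(r-\epsilon)\vee 0}^{r}X_s(u)\,ds\right)dW_r,
\end{equation*}
so that $J_\epsilon(u)-J_\epsilon(v)$ is itself an It\^o integral of the backward average of $X_\cdot(u)-X_\cdot(v)$; the It\^o isometry and Jensen's inequality then yield $E|J_\epsilon(u)-J_\epsilon(v)|^2\leq E\int_0^T|X_s(u)-X_s(v)|^2ds\leq C_n|u-v|$ with no $\epsilon$-dependence at all. (Equivalently, one expands the square as a double integral over $[0,T]^2$ and uses the martingale property to kill all terms with $|s-t|>\epsilon$, so that the band of measure $O(\epsilon)$ supplies the second factor of $\epsilon$.) With that substitution your argument goes through, modulo the localisation you already flag for converting the pathwise hypotheses of $\mathcal{R}_2$ into the higher moments needed for Kolmogorov's criterion.
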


Bearing in mind the above statements we shall look for the solution of process \eqref{PE} by using the substitution formula. First, to fix ideas we set $L_0=x$, for $x\in\mathbb{R}$. Transforming the process \eqref{PE} to a more convenient one, i.e., $\{(L_t)^{1-p}\}_{t\geq 0}$ and applying It\^o's formula, we find that the solution is given by
\begin{equation}
\label{sol}
L^x_t=\left(\left(\frac{1}{x}\right)^{p-1}-c_1(p-1)t-c_2(p-1)W_t\right)^{\frac{1}{1-p}},\hspace{.4cm}t\geq 0.
\end{equation}

Since $p>1$, the process \eqref{sol} may explode with positive probability (see \cite{mck}) and therefore it is necessary to use that the forward integral satisfies the following local property which was proved in \cite{navarro}. 

\begin{lema}
\label{lemaF}
Let $v, u \in \text{Dom } \delta_{T}^{-}$ be mesurable processes and $A\in \mathcal{F}$ such that
\begin{equation*}
u_t=v_t \quad \text{a.s. on}\quad A\times [0,T].
\end{equation*}
%x
Then $\int_0^Tv_sd_s^{-}W_s=\int_0^Tu_sd_s^{-}W_s$ a.s. on $A$.
\end{lema}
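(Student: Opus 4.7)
The strategy is to unwind the definition of the forward integral into its $\epsilon$-approximations, show that those approximations coincide on $A$ pathwise, and then transfer the identity to the limits through the convergence in probability. For each $\epsilon>0$, introduce
\begin{equation*}
X_\epsilon:=\frac{1}{\epsilon}\int_0^{T}v_s(W_{(s+\epsilon)\wedge T}-W_s)\,ds,\qquad Y_\epsilon:=\frac{1}{\epsilon}\int_0^{T}u_s(W_{(s+\epsilon)\wedge T}-W_s)\,ds,
\end{equation*}
so that by the hypothesis $v,u\in\text{Dom }\delta_T^{-}$ we have $X_\epsilon\to \int_0^Tv_s\,d^{-}_sW_s=:J_v$ and $Y_\epsilon\to \int_0^Tu_s\,d^{-}_sW_s=:J_u$ in probability as $\epsilon\downarrow 0$. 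The task reduces to proving $X_\epsilon=Y_\epsilon$ a.s.\ on $A$ for every $\epsilon$ and then showing that this pointwise identity is inherited by the in-probability limits on $A$.

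For the first step, I would invoke Fubini's theorem on $\Omega\times[0,T]$ equipped with $P\otimes\lambda$. The assumption that $u_t=v_t$ a.s.\ on $A\times[0,T]$ means $(P\otimes\lambda)\bigl(\{(\omega,t)\in A\times[0,T]:u_t(\omega)\neq v_t(\omega)\}\bigr)=0$, so Fubini produces a $P$-null set $N\subset A$ such that for every $\omega\in A\setminus N$ the trajectories $s\mapsto v_s(\omega)$ and $s\mapsto u_s(\omega)$ agree for Lebesgue-a.e.\ $s\in[0,T]$. Since $X_\epsilon(\omega)$ and $Y_\epsilon(\omega)$ are ordinary Lebesgue integrals against the same weight $W_{(s+\epsilon)\wedge T}(\omega)-W_s(\omega)$, they are equal on $A\setminus N$, giving $X_\epsilon=Y_\epsilon$ a.s.\ on $A$.

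For the transfer to the limits, I would use a standard triangle estimate. For any $\delta>0$ and any $\epsilon>0$,
\begin{equation*}
P\bigl(|J_v-J_u|>\delta,\,A\bigr)\leq P\bigl(|J_v-X_\epsilon|>\delta/2\bigr)+P\bigl(|Y_\epsilon-J_u|>\delta/2\bigr)+P\bigl(X_\epsilon\neq Y_\epsilon,\,A\bigr),
\end{equation*}
where the third term vanishes by the previous paragraph and the first two tend to $0$ as $\epsilon\downarrow 0$ by convergence in probability. Letting $\epsilon\downarrow 0$ yields $P(|J_v-J_u|>\delta,\,A)=0$ for every $\delta>0$, and therefore $J_v=J_u$ a.s.\ on $A$.

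The argument has no deep obstacle; the one point requiring care is the Fubini step, which needs $u,v$ to be jointly measurable in $(s,\omega)$ so that the event $\{u\neq v\}$ has a well-defined product measure, and needs the trajectories to be integrable so that the pointwise integrals defining $X_\epsilon(\omega),Y_\epsilon(\omega)$ make sense. Both are built into the definition of $\text{Dom }\delta_T^{-}$, so this technicality is free. Everything else is the routine passage from pathwise equality of the approximants to equality of their in-probability limits on $A$.
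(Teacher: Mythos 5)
Your proof is correct. Note that the paper itself does not prove this lemma but simply cites the thesis \cite{navarro}; your argument (pathwise equality of the $\epsilon$-approximants on $A$ via Fubini, then transfer to the in-probability limits by the triangle inequality $P(|J_v-J_u|>\delta,\,A)\leq P(|J_v-X_\epsilon|>\delta/2)+P(|Y_\epsilon-J_u|>\delta/2)+P(X_\epsilon\neq Y_\epsilon,\,A)$) is the standard one and supplies a complete, self-contained justification of what the paper leaves to the reference.
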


As a consequence of Lemma \ref{lemaF}, the domain of forward integral can be extended in the following manner. 

\begin{defi}
\label{local}
A process $v$ is locally forward integrable (i.e. $v\in  \text{Dom } \delta_{T, loc}^{-}$) on a measurable set $A\in \mathcal{F}$, if there exist a subsequence $\{(v^{n}, A_n)\}_{n\in \mathbb{N}}$ in $\text{Dom}\:\delta_{T}^{-}\times \mathcal{F}$, such that:
 
\begin{enumerate}
  \item[i)] $A_n \nearrow A$ a.s. 
  \item[ii)] $v=v^{n}$ a.s.  on $A_n\times [0,T]$.
\end{enumerate}

Thus, 
\begin{equation*}
\int_0^Tv_sd^{-}W_s\equiv \int_0^T v_s^n d^{-}W_s \quad \text{on}\quad A_n.
\end{equation*}
\end{defi}

\begin{obs}
Lemma \ref{lemaF} implies that Definition \ref{local} is independent of the localizing sequence $\{(v^{n}, A_n)\}_{n\in \mathbb{N}}$.
\end{obs}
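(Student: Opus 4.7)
The plan is to show that if $\{(v^n, A_n)\}_{n\in\mathbb{N}}$ and $\{(w^m, B_m)\}_{m\in\mathbb{N}}$ are two localizing sequences for $v$ on the same set $A$ in the sense of Definition \ref{local}, then on the overlap they produce the same integral, and hence the object $\int_0^T v_s\, d^-W_s$ built from either sequence coincides a.s.\ on $A$. This is essentially a direct consistency check that feeds Lemma \ref{lemaF}.

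First I would fix arbitrary indices $n$ and $m$ and focus on the measurable set $C_{n,m} := A_n \cap B_m \in \mathcal{F}$. By the defining property of both localizing sequences, $v^n = v$ a.s.\ on $A_n \times [0,T]$ and $w^m = v$ a.s.\ on $B_m \times [0,T]$, so by transitivity $v^n = w^m$ a.s.\ on $C_{n,m} \times [0,T]$. Since $v^n$ and $w^m$ both lie in $\text{Dom}\,\delta_T^-$, Lemma \ref{lemaF} applies verbatim to the pair $(v^n, w^m)$ with exceptional set $C_{n,m}$, yielding
\begin{equation*}
\int_0^T v^n_s\, d^-W_s = \int_0^T w^m_s\, d^-W_s \quad \text{a.s.\ on } C_{n,m}.
\end{equation*}
This is exactly what must hold for the prescription in Definition \ref{local} to be unambiguous: on the set $A_n$ the first sequence defines the integral as $\int_0^T v^n_s\,d^-W_s$, and on $B_m$ the second defines it as $\int_0^T w^m_s\,d^-W_s$, so on $C_{n,m}$ the two definitions must agree.

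Finally I would pass to the limit. Because $A_n \nearrow A$ and $B_m \nearrow A$ a.s., the family $\{C_{n,m}\}_{n,m\in\mathbb{N}}$ covers $A$ up to a $P$-null set (for each $\omega \in A$ outside a null set there exist $n,m$ with $\omega \in A_n \cap B_m$, e.g.\ the diagonal $C_{n,n}$ suffices). Consequently, the two candidate random variables for $\int_0^T v_s\, d^-W_s$ coincide a.s.\ on $A$, proving independence of the localizing sequence.

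There is no real obstacle here beyond bookkeeping of null sets: the content of the remark is exactly the content of Lemma \ref{lemaF} applied to the intersection $A_n \cap B_m$, and the only subtlety is to verify that the countable union of these overlap sets exhausts $A$ modulo $P$-null sets, which is immediate from the two monotone-convergence hypotheses.
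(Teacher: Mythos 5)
Your proposal is correct and is exactly the argument the remark is implicitly invoking: intersect the two localizing sequences, apply Lemma \ref{lemaF} to the pair $(v^n, w^m)$ on $A_n\cap B_m$, and note that the diagonal sets $A_n\cap B_n$ increase to $A$ a.s., so the two candidate integrals agree a.s.\ on $A$. The paper leaves this unproved, and your bookkeeping (including the null-set handling) fills it in correctly with no genuinely different route.
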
  

Before we prove the main result of this section, we shall need to define the following functions. For $m\in\mathbb{N}$ large enough, let $\varphi_m\in C^{\infty}(\mathbb{R})$ be bounded functions satisfying 
\begin{equation}
\label{fis}
\varphi_{m}(x)=
\begin{cases}
\frac{2}{m^{(p-1)/p}}, \quad x\leq \frac{2}{m^{(p-1)/p}} \\
\quad x, \hspace{1.02cm}  x\in(\frac{3}{m^{(p-1)/p}},m) \\
m+1, \hspace{0.68cm} x\geq m+1.
\end{cases}
\end{equation}
The existence of these functions can be verified on \cite{lee}. It is clear that $\varphi_m(x)>0$ for all $x\in\mathbb{R}$. Hence, for $t>0$, the It\^o's stochastic integrals
\begin{equation*}
\int_0^t c_2\left(\varphi_m\left(\left(\frac{1}{x}\right)^{p-1}\!\!\!-c_1(p-1)s-c_2(p-1)W_s\right)\right)^{\frac{p}{1-p}}dW_s,
\end{equation*}
are well-defined for any $t>0$. In addition, it is easy to see that the random fields 
\begin{equation*}
\left(\varphi_m\left(u^{p-1}-c_1(p-1)s-c_2(p-1)W_s\right)\right)^{\frac{p}{1-p}},
\end{equation*}
satisfy the hypotheses of Theorem \ref{forsubs} and therefore the forward integrals 
\begin{equation*}
\int_0^t c_2\left(\varphi_m\left(\left(I\right)^{1-p}\!\!\!-c_1(p-1)s-c_2(p-1)W_s\right)\right)^{\frac{p}{1-p}}d^-W_s,
\end{equation*}
are well-defined too. 

Now we are in position to present the main result of this section. 

\begin{proof}[Proof of the Theorem \ref{solteo}]
For $m\in\mathbb{N}$ large enough, we consider the sets
\begin{equation*}
\begin{split}
A_m=&\left\{ \left(I\right)^{1-p}\!\!\!-c_1(1-p)s-c_2(p-1)\sup_{0\leq s\leq \bar{t}}W_s\geq \frac{3}{m^{(p-1)/p}},\right.\\
 &\quad\left.\left(I\right)^{1-p}\!\!\!-m\leq c_2(p-1)\inf_{0\leq s \leq \bar{t}}W_s\right\}.
\end{split}
\end{equation*} 
Accordingly with the definition of functions \eqref{fis}, we can verify that
\begin{equation}
\label{locll}
\varphi_{m}\left(\left(I\right)^{1-p}\!\!\!-c_1(p-1)s-c_2(p-1)W_s\right)=\left(I\right)^{1-p}\!\!\!-c_1(p-1)s-c_2(p-1)W_s
\end{equation}
on $A_m\times [0,t]$ a.s., for $t\leq \bar{t}$. Thus, from the substitution formula \eqref{subs} in Theorem \ref{forsubs}  we get

\begin{align}\notag
1_{A_m}&\int_0^{t}c_2 \left(\varphi_{m}\left(\left(I\right)^{1-p}\!\!\!-c_1(p-1)s-c_2(p-1)W_s\right)\right)^{\frac{p}{1-p}}d^-W_s\\ \notag
&=\left(1_{\left\{x^{p-1}-c_1(p-1)s-c_2(p-1)\sup_{0\leq s\leq \bar{t}}W_s\geq \frac{3}{m^{(p-1)/p}}, x^{p-1}-m\leq c_2(p-1)\inf_{0\leq s \leq \bar{t}}W_s,\frac{1}{x}>0\right\}}\right.\\ \notag
&\left.\hspace{2.25cm}\times \int_0^t c_2 \left(\varphi_{m}\left(x^{p-1}-c_1(p-1)s-c_2(p-1)W_s\right)\right)^{\frac{p}{1-p}}dW_s\right)_{x=1/I}
\end{align}
and from equality \eqref{locll} we have
\begin{align}\notag
&\qquad=\left(1_{\left\{x^{p-1}-c_1(p-1)s-c_2(p-1)\sup_{0\leq s\leq \bar{t}}W_s\geq \frac{3}{m^{(p-1)/p}}, x^{p-1}-m\leq c_2(p-1)\inf_{0\leq s \leq \bar{t}}W_s,\frac{1}{x}>0\right\}}\right.\\ \notag
&\left.\hspace{2.88cm}\times \int_0^t c_2 \left(x^{p-1}-c_1(p-1)s-c_2(p-1)W_s\right)^{\frac{p}{1-p}}dW_s\right)_{x=1/I}.
\end{align}
Replacing the integrand in the above equation by the formula defined in \eqref{sol}, it is obtained that
\begin{align}\notag
1_{A_m}&\int_0^{t}c_2 \left(\varphi_{m}\left(\left(I\right)^{1-p}\!\!\!-c_1(p-1)s-c_2(p-1)W_s\right)\right)^{\frac{p}{1-p}}d^-W_s\\ \notag
&=\left(1_{\left\{x^{p-1}-c_1(p-1)s-c_2(p-1)\sup_{0\leq s\leq \bar{t}}W_s\geq \frac{3}{m^{(p-1)/p}}, x^{p-1}-m\leq c_2(p-1)\inf_{0\leq s \leq \bar{t}}W_s,\frac{1}{x}>0\right\}}\right.\\ \notag
&\left.\hspace{2.25cm}\times \int_0^t c_2\left(L_s^{\frac{1}{x}}\right)^{p}dW_s\right)_{x=1/I}\\\notag
&=\left(1_{\left\{x^{p-1}-c_1(p-1)s-c_2(p-1)\sup_{0\leq s\leq \bar{t}}W_s\geq \frac{3}{m^{(p-1)/p}}, x^{p-1}-m\leq c_2(p-1)\inf_{0\leq s \leq \bar{t}}W_s,\frac{1}{x}>0\right\}}\right.\\ \notag
&\left.\hspace{2.25cm}\times \left\{L_t^{\frac{1}{x}}-\frac{1}{x}-\int_0^t\left(c_1\left(L_s^{\frac{1}{x}}\right)^p+\frac{pc_2^2}{2}\left(L_s^{\frac{1}{x}}\right)^{2p-1}\right)ds\right\}\right)_{x=1/I}\\ \notag
&=1_{A_m}\left(L_t^{I}-I-\int_0^t \left(c_1\left(L_s^{I}\right)^p+\frac{pc_2^2}{2}\left(L_s^{I}\right)^{2p-1}\right)ds\right),
\end{align}
where the last equality follows from Fubini's Theorem. 

Finally, since $A_m \nearrow A_{\bar{t}}$ a.s., then $\{\varphi_{m}\left(\left(\frac{1}{I}\right)^{p-1}\!\!\!-c_1(p-1)s-c_2(p-1)W_s\right),A_m\}$ is a localizing sequence for the process $\left(\frac{1}{I}\right)^{p-1}\!\!\!-c_1(p-1).-c_2(p-1)W_{.}$ on $A_{\bar{t}}$. This concludes the proof.
\end{proof}

%%%%%%%%%%%%%%%%%%%%%%%%%%%%%%%%%%%%%%%%%%%%%%%%%%%%%%%%%%
%%%%%%%%%%%OTHER SECTION %%%%%%%%%%%%%%%%%%%%%%%%%%%%%%%%%%%%%%%%
%%%%%%%%%%%%%%%%%%%%%%%%%%%%%%%%%%%%%%%%%%%%%%%%%%%%%%%%%%
\section{Life Distribution}
\label{LD}
We have proved in Theorem \ref{solteo} that the solution of the process \eqref{SE} is given by
\begin{equation*}
L_t^{I}=\left(\left(I\right)^{1-p}-c_1(p-1)t-c_2(p-1)W_t\right)^{\frac{1}{1-p}},\hspace{.4cm}t\geq 0
\end{equation*}
and, as discussed before, we consider the random variable 
\begin{equation}
\label{bup}
\tau=\inf\left\{t>0: W_t\geq \frac{1}{c_2(p-1)I^{p-1}}-\frac{c_1}{c_2}t\right\}
\end{equation}
as its blow up time. 

To compute the distribution function of \eqref{bup}, we begin this section by proving crossing results for the Brownian bridge under conditions that will be sufficient for our main purpose. The proofs of this results follow a combination of invariance properties of standard Brownian motion in a similar fashion as \cite{scheike}.

We begin by recalling the following results (see, for instance \cite{borodin}, \cite{doob}, \cite{scheike}).

\begin{propo}
\label{clasico}
Let $W$ be an standard Brownian motion and let $g(t)=a+bt$. 
\begin{description}
  \item[i)] If $a,b>0$, then
\begin{equation*}
P\left(\bigcup_{0\leq t }\{W_t\geq g(t)\}\right)=  \exp(-2ab).
\end{equation*}

If either $a\leq 0$ or $b\leq 0$, then the probability is $1$. 
\item[ii)] If $a>0$ and $r<\infty$, then  
\begin{equation}
\label{err}
P\left(\bigcup_{0\leq t \leq r }\{W_t\geq g(t)\}\right)=1-\Phi\left(\frac{a}{\sqrt{r}}+b\sqrt{r}\right) +\exp(-2ab)\Phi\left(b\sqrt{r}-\frac{a}{\sqrt{r}}\right).
\end{equation}

If $a\leq 0$, then the probability is $1$. \footnote{In the original paper \cite{scheike} there is a mismatched sign in the last term of \eqref{err}. This was a typo that was later corrected in \cite{abundo}.}
\end{description}
\end{propo}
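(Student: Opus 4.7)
The plan is to reduce both parts to hitting-time problems for the drifted process $X_t := W_t - bt$, and then combine a Girsanov change of measure with the reflection principle. Writing the event $\{W_t \ge a + bt\}$ as $\{X_t \ge a\}$, the two probabilities in the statement become $P(\sup_{0\le t\le r} X_t \ge a)$ and $P(\sup_{t\ge 0} X_t \ge a)$ respectively. First I would dispose of the degenerate cases: if $a\le 0$ then $X_0 = 0 \ge a$, so both indicators equal $1$; for part (i) with $a>0$ and $b\le 0$, the boundary $a+bt$ is non-increasing while $\limsup_{t\to\infty} W_t = +\infty$ almost surely, so the crossing is certain.

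For the nontrivial case of (ii), with $a>0$ and $b\in\mathbb{R}$, I would introduce the probability measure $Q$ on $\mathcal{F}_r$ defined by $dQ/dP = \exp(bW_r - b^2 r/2)$. By Girsanov's theorem, $\tilde W_t := W_t - bt$ is a standard Brownian motion on $[0,r]$ under $Q$, and $dP/dQ = \exp(-b\tilde W_r - b^2 r/2)$, so that
\begin{equation*}
P\!\left(\sup_{0\le t\le r} X_t \ge a\right) = E_Q\!\left[\exp\!\left(-b\tilde W_r - \tfrac{b^2 r}{2}\right) 1_{\{\sup_{0\le t\le r} \tilde W_t \ge a\}}\right].
\end{equation*}
Next I would split the event inside the indicator into the disjoint pieces $A = \{\tilde W_r \ge a\}$ and $B = \{\sup_{0\le t\le r}\tilde W_t \ge a,\ \tilde W_r < a\}$. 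The contribution from $A$ is a direct Gaussian integral which, upon completing the square $-bx - b^2 r/2 - x^2/(2r) = -(x+br)^2/(2r)$, reduces to $1 - \Phi(a/\sqrt r + b\sqrt r)$. For $B$ I would invoke the reflection identity
\begin{equation*}
E_Q[f(\tilde W_r) 1_B] = E_Q[f(2a - \tilde W_r) 1_{\{\tilde W_r > a\}}]
\end{equation*}
with $f(y) = \exp(-by - b^2 r/2)$, which extracts a factor $e^{-2ab}$ and leaves a second Gaussian integral completing to $e^{-2ab}\Phi(b\sqrt r - a/\sqrt r)$. Summing the two contributions yields \eqref{err}.

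Part (i) then follows by letting $r\to\infty$ in the formula of (ii): by monotone convergence the left-hand side tends to $P(\sup_{t\ge 0} X_t \ge a)$, and on the right both $\Phi(a/\sqrt r + b\sqrt r)$ and $\Phi(b\sqrt r - a/\sqrt r)$ converge to $1$ (since $b>0$), leaving $\exp(-2ab)$. The main obstacle is the reflection identity under $Q$: it holds cleanly here precisely because $dP/dQ$ is a function of $\tilde W_r$ alone, so the post-$\tilde\tau_a$ reflection $y\mapsto 2a-y$ (justified by the strong Markov property of the $Q$-Brownian motion $\tilde W$ at the hitting time $\tilde\tau_a = \inf\{t\le r : \tilde W_t = a\}$) is compatible with the Radon--Nikodym weight. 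Once this compatibility is recorded, the remaining work amounts to the two standard Gaussian integrations indicated above.
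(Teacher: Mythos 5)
Your argument is correct and complete. Note, however, that the paper itself gives no proof of Proposition \ref{clasico}: it is recalled as a classical fact with pointers to the literature (\cite{borodin}, \cite{doob}, \cite{scheike}), so there is no internal proof to compare against. Your route --- rewriting the crossing event as $\{\sup_{0\le t\le r}(W_t-bt)\ge a\}$, removing the drift by the Girsanov change of measure $dQ/dP=\exp(bW_r-b^2r/2)$, splitting on $\{\tilde W_r\ge a\}$ versus $\{\sup\tilde W\ge a,\ \tilde W_r<a\}$, and applying the reflection identity to the latter --- is the standard derivation of the Bachelier--L\'evy formula for a linear boundary, and each step checks out: the two Gaussian integrals complete the square correctly to $1-\Phi(a/\sqrt r+b\sqrt r)$ and $e^{-2ab}\Phi(b\sqrt r-a/\sqrt r)$, the degenerate cases are disposed of properly, and the passage $r\to\infty$ for part \textbf{i)} via continuity of measure is sound (for $b>0$ both $\Phi$-arguments tend to $+\infty$, leaving $e^{-2ab}$; for $b\le 0$ your separate recurrence argument is needed and you supply it). Your observation that the reflection step is compatible with the change of measure precisely because $dP/dQ$ is a function of $\tilde W_r$ alone is the one point where a careless write-up could go wrong, and you have flagged it correctly. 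The only cosmetic slip is the definition $\tilde\tau_a=\inf\{t\le r:\tilde W_t=a\}$, which should read $\inf\{t\ge 0:\tilde W_t=a\}$ (with the event $\{\tilde\tau_a\le r\}$); this does not affect the argument.
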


Now, we are ready to prove some crossing results concerning the Brownian bridge. The first result is the following. 

\begin{teorema}
\label{S/A}
Let $\Phi$ be the normal standard distribution, $a, b, x\in\mathbb{R}$, $T, r>0$ and define $v=\frac{1}{r}-\frac{1}{T}$.
\begin{description}
  \item[a)] If $a>0$ and $a-bT>x$, then 
  \begin{equation}
  \label{bb}
\D P\left(\bigcup_{0\leq t \leq T}\{W_t\geq a-bt\}| W_T=x\right)=\exp\left(-2a(a-x-bT)/T\right).
\end{equation}
If either $a\leq 0$ or $a-bT\leq x$, then the probability \eqref{bb} is 1.
\item[b)] Let $r<T$. If $a>0$, then
\begin{equation}
\label{bbcp2c}
\begin{split}
\D P&\left(\bigcup_{0\leq t \leq r}\{W_t\geq a-bt\}| W_T=x\right)\\
&\qquad =1-\Phi\left(\frac{a-x-bT}{T\sqrt{v}}+a\sqrt{v}\right)\\
&\qquad \quad+\exp\left(-2a(a-x-bT)/T\right)\Phi\left(\frac{a-x-bT}{T\sqrt{v}}-a\sqrt{v}\right).
\end{split}
\end{equation}

If $a\leq 0$, then the probability in \eqref{bbcp2c} is 1.
\end{description}
\end{teorema}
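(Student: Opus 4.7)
The plan is to reduce each conditional crossing probability to the classical formula for the maximum of a Brownian bridge (Proposition \ref{clasico}), absorbing the linear drift of the boundary into the endpoint of the bridge. The key input is the following identity in law: writing $X_t:=W_t+bt$, so that $\{W_t\ge a-bt\}=\{X_t\ge a\}$, the conditional law of $(X_t)_{0\le t\le T}$ given $W_T=x$ is that of a Brownian bridge from $0$ to $x+bT$ over $[0,T]$. Indeed, conditioning $W$ on $W_T=x$ produces the standard representation $W_t=(t/T)x+B_t$ with $B$ a Brownian bridge from $0$ to $0$, so $X_t=(t/T)(x+bT)+B_t$, which is precisely the asserted bridge.

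For part (a), this identification combined with the Brownian bridge maximum formula (for a bridge from $0$ to $y$ the probability of exceeding a level $a>0$ equals $\exp(-2a(a-y)/T)$ when $a>y$, and $1$ otherwise) immediately yields \eqref{bb} upon setting $y=x+bT$. The degenerate cases are transparent: if $a\le 0$ the boundary is crossed at $t=0$, while if $a-bT\le x$ then $X_T\ge a$, so the crossing occurs at $t=T$ at the latest.

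For part (b), I plan to condition additionally on $W_r$ and invoke the Markov property: given $W_r$, the path $(W_t)_{0\le t\le r}$ is independent of $W_T$. The probability in \eqref{bbcp2c} then becomes an integral against the conditional density of $W_r$ given $W_T=x$, which is the normal law $\mathcal{N}(rx/T,\,r(T-r)/T)$. For each value $y$ of $W_r$, the inner probability is evaluated by part (a) applied with $(T,x)$ replaced by $(r,y)$: it equals $1$ when $y\ge a-br$ and $\exp(-2a(a-y-br)/r)$ otherwise. Splitting the integration at $y=a-br$, the upper piece contributes $P(W_r\ge a-br\mid W_T=x)$ which, after standardising, becomes $1-\Phi((a-x-bT)/(T\sqrt{v})+a\sqrt{v})$; the lower piece is a one-sided Gaussian integral to be evaluated by completing the square in $y$.

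The main technical obstacle lies in this last Gaussian integral, where one must verify that the completed-square constant simplifies exactly to the prefactor $\exp(-2a(a-x-bT)/T)$ claimed in the theorem and that the boundary $y=a-br$ translates into the argument $(a-x-bT)/(T\sqrt{v})-a\sqrt{v}$ of $\Phi$. Both reductions rest on the elementary identities $\sigma^2 v=(T-r)^2/T^2$ and $T\sqrt{v}=\sqrt{T(T-r)/r}$, derived from $v=(T-r)/(rT)$ and $\sigma^2=r(T-r)/T$; they are mechanical but require careful bookkeeping, and once carried out the two pieces add up to the stated expression.
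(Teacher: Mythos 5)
Your proposal is correct, but it follows a genuinely different route from the paper. The paper proves both parts by time inversion: it replaces $W_t$ by $tW_{1/t}$, which turns the conditioning at time $T$ into conditioning at time $1/T$ and converts the problem into an unconditional first-passage probability for a Brownian motion over the half-line $[v,\infty)$; part a) then follows from the unconditional linear-boundary formula (Proposition \ref{clasico} i)) at $v=0$, and part b) by conditioning on $W_v$ and evaluating a Gaussian integral. You instead stay in the original time scale: for a) you absorb the drift into the bridge endpoint via $X_t=W_t+bt$ and quote the classical maximum-of-a-Brownian-bridge formula (which is exactly what the paper derives by inversion, so your part a) is a reduction to a known special case rather than a derivation of it); for b) you condition on $W_r$, use the Markov property to decouple $(W_t)_{t\le r}$ from $W_T$, and integrate your part a) against the conditional law $\mathcal{N}(rx/T,\,r(T-r)/T)$ of $W_r$ given $W_T=x$. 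I checked the two reductions you flag as the technical obstacle: with $\mu=rx/T$, $\sigma^2=r(T-r)/T$ and $\lambda=2a\sigma/r=2a\sqrt{v}$, completing the square gives the exponent $-\tfrac{2a}{r}(a-br-\mu)+\tfrac{\lambda^2}{2}=-\tfrac{2a}{T}(a-x-bT)$ and the $\Phi$-argument $\tfrac{a-br-\mu}{\sigma}-\lambda=\tfrac{a-x-bT}{T\sqrt{v}}-a\sqrt{v}$, so the pieces do assemble into \eqref{bbcp2c} as claimed. Your approach buys a more elementary argument that avoids the somewhat delicate manipulation of the conditioning under time inversion and the conditional-independence step in \eqref{res_p}; the paper's approach buys uniformity, since the same inversion handles $r=T$ and $r<T$ in one stroke and reduces everything to the single unconditional Proposition \ref{clasico}. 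The only thing left incomplete in your write-up is the final Gaussian computation itself, which you correctly identify and which does close as stated.
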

\begin{proof}
Assume that $r\leq T$. Using invariance properties of Brownian motion we get, 
\begin{align}\notag
P\left(\bigcup_{0\leq t \leq r}\{W_t\geq a-bt\}| W_T=x\right)=&P\left(\bigcup_{0\leq t \leq r}\{tW_{\frac{1}{t}}\geq a-bt\}| TW_{\frac{1}{T}}=x\right)\\ \label{uno}
=&P\left(\bigcup_{\frac{1}{r}\leq t}\{W_t\geq at-b\}| W_{\frac{1}{T}}=\frac{x}{T}\right).
\end{align}
Since $r\leq T$ it is satisfied  that $\frac{1}{r}=\frac{1}{T}+v$ for some $v\geq 0$, therefore we can rewrite the last expression in \eqref{uno} as 
\begin{align} \notag
P&\left(\bigcup_{\frac{1}{r}\leq t}\{W_t\geq at-b\}| W_{\frac{1}{T}}=\frac{x}{T}\right)\\\notag
=&P\left(\bigcup_{v\leq t-\frac{1}{T}}\left\{W_{\left(t-\frac{1}{T}\right)+\frac{1}{T}}-W_{\frac{1}{T}}\geq a\left(t-\frac{1}{T}\right)+\frac{a}{T}-b-\frac{x}{T}\right\}| W_{\frac{1}{T}}=\frac{x}{T}\right)\\\notag
=&P\left(\bigcup_{v\leq t}\left\{W_{t+\frac{1}{T}}-W_{\frac{1}{T}}\geq at+\frac{a}{T}-b-\frac{x}{T}\right\}| W_{\frac{1}{T}}=\frac{x}{T}\right). 
\end{align}
Using the independence of increments of brownian motion we conclude 
\begin{equation}
\label{res_p}
P\left(\bigcup_{0\leq t \leq r}\{W_t\geq a-bt\}| W_T=x\right)=P\left(\bigcup_{v\leq t}\left\{W_{t}\geq at+\frac{a}{T}-b-\frac{x}{T}\right\}\right).
\end{equation}
Observe that $r=T$ implies that $v=0$, therefore we can apply case \textbf{i)} of Proposition \ref{clasico} to the right-hand side of equation \eqref{res_p} and thus \textbf{a)} follows immediately. 

Now, in equality \eqref{res_p}, let as assume that $v>0$, i.e., $r<T$ and, to simplify notation, we set $\tilde{b}=\frac{a}{T}-b-\frac{x}{T}$. Conditioning on the value of random variable $W_v$  and using once again invariance properties and independence of increments of brownian motion, we obtain
\begin{align}\notag
P&\left(\bigcup_{v\leq t}\left\{W_{t}\geq at+\tilde{b}\right\}\right) \\ \notag
&\qquad =\int_{-\infty}^{\infty}P\left(\bigcup_{v\leq t}\left\{W_{t}\geq at+\tilde{b}\right\}|W_{v}=y\right)P(W_v\in dy)\\ \notag
&\qquad =\int_{-\infty}^{\infty}P\left(\bigcup_{0\leq t}\left\{W_{t+v}-W_v\geq at+av+\tilde{b}-y\right\}\right)P(W_v\in dy)\\ \label{dos}
&\qquad =\int_{-\infty}^{\infty}P\left(\bigcup_{0\leq t}\left\{W_{t}\geq at+av+\tilde{b}-y\right\}\right)P(W_v\in dy).
\end{align}
Assuming that $a>0$ and applying the case \textbf{i)} of Proposition \ref{clasico} in \eqref{dos} we get
\begin{align}\notag
P&\left(\bigcup_{v\leq t}\left\{W_{t}\geq at+\tilde{b}\right\}\right) \\ \label{tres}
&\qquad =\int_{-\infty}^{av+\tilde{b}}\exp\left(-2\left\{av+\tilde{b}-y\right\}a\right)\phi(0,v)dy+\int^{\infty}_{av+\tilde{b}}\phi(0,v)dy.
\end{align}
Using the properties of the normal distribution and the following integration formula
\begin{equation*}
\label{if}
\int_{-\infty}^k\exp(-\{ax^2+bx\})dx=\exp\left(\frac{b^2}{4a}\right)\sqrt{\frac{\pi}{a}}\Phi\left(\frac{2ka+b}{\sqrt{2a}}\right)
\end{equation*}
 in expression \eqref{tres}, we get 
\begin{equation*}
\begin{split}
P\left(\bigcup_{v\leq t}\left\{W_{t}\geq at+\tilde{b}\right\}\right)=&\exp\left(-2a\tilde{b}\right)\Phi\left(\frac{\tilde{b}}{\sqrt{v}}-a\sqrt{v}\right)\\
&+1-\Phi\left(\frac{\tilde{b}}{\sqrt{v}}+a\sqrt{v}\right).
\end{split}
\end{equation*}
Finally the result is obtained if we replace $\tilde{b}=\frac{a}{T}-b-\frac{x}{T}$ in the above equality. This ends the proof.
\end{proof}

\begin{obs}
If $r\to T^{-}$ in \eqref{bbcp2c}, we readily obtain the result \eqref{bb}.
\end{obs}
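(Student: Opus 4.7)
The plan is to show that the right-hand side of \eqref{bbcp2c}, viewed as a function of $r$ through $v=1/r-1/T$, converges as $r\to T^{-}$ to the right-hand side of \eqref{bb}. Since the probability itself is a nondecreasing function of $r$ and the events $\bigcup_{0\le t\le r}\{W_t\ge a-bt\}$ increase up to $\bigcup_{0\le t\le T}\{W_t\ge a-bt\}$, the continuity of the formula at $r=T$ suffices; no independent analytic argument about the conditional probability is needed once the formula in \eqref{bbcp2c} is established.

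First I would record the basic asymptotics: as $r\to T^{-}$, we have $v\to 0^{+}$, hence $\sqrt{v}\to 0^{+}$ and $1/(T\sqrt{v})\to+\infty$. I then focus on the principal case of \eqref{bb}, namely $a>0$ and $a-bT>x$, so that the quantity $a-x-bT$ is strictly positive. Under this sign condition, both arguments of $\Phi$ in \eqref{bbcp2c},
\begin{equation*}
\frac{a-x-bT}{T\sqrt{v}}+a\sqrt{v}\qquad\text{and}\qquad\frac{a-x-bT}{T\sqrt{v}}-a\sqrt{v},
\end{equation*}
are dominated by the first term and therefore tend to $+\infty$, so each $\Phi$ value tends to $1$. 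Substituting into \eqref{bbcp2c} gives
\begin{equation*}
\lim_{r\to T^{-}}P\!\left(\bigcup_{0\le t\le r}\{W_t\ge a-bt\}\,\Big|\,W_T=x\right)=1-1+\exp\!\left(-2a(a-x-bT)/T\right)\cdot 1,
\end{equation*}
which is exactly the right-hand side of \eqref{bb}.

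Next I would check the degenerate subcases of \eqref{bb} where the claimed value is $1$. If $a-bT=x$, then $a-x-bT=0$, so both $\Phi$-arguments collapse to $\pm a\sqrt{v}\to 0$, giving $\Phi(\pm a\sqrt v)\to 1/2$, while the exponential prefactor equals $1$; hence the limit in \eqref{bbcp2c} is $1-\tfrac12+\tfrac12=1$. If $a-bT<x$, then the first term in each $\Phi$-argument tends to $-\infty$, so both $\Phi$ values vanish; since $\exp(-2a(a-x-bT)/T)$ is bounded and its coefficient $\Phi(\cdots)$ goes to $0$, the limit is $1-0+0=1$. Combined with the case $a\le 0$ — for which both \eqref{bb} and \eqref{bbcp2c} are asserted to be $1$ by hypothesis and thus require nothing to verify — this exhausts all cases.

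There is no genuine obstacle here: the argument is a straightforward limit computation, and the only point deserving care is the sign analysis of $a-x-bT$ inside the two $\Phi$-arguments, which dictates whether the normal distribution function tends to $0$, $1/2$, or $1$ and hence which of the three subcases one lands in.
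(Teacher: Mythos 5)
Your verification is correct and is precisely the limit computation the paper leaves implicit: as $r\to T^{-}$ one has $v\to 0^{+}$, the sign of $a-x-bT$ determines whether the two $\Phi$-terms tend to $1$, $\tfrac12$, or $0$, and in each of the three subcases the right-hand side of \eqref{bbcp2c} collapses to the corresponding value in \eqref{bb}. The case analysis is exhaustive and nothing further is needed.
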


The second result is the following. 

\begin{teorema}
\label{teotr}
Let be $T<r$. If $a>0$ and $x<a-bT$ then
\begin{equation}
\label{Tmenor}
\begin{split}
P&\left(\bigcup_{0\leq t \leq r}\{W_t\geq a-bt\}| W_T=x\right)\\
&\quad=\exp\left(-2a(a-x-bT)/T\right)\left[\Phi\left(\frac{a-x-bT}{\sqrt{r-T}}-b\sqrt{r-T}\right)\right.\\
&\qquad \left.-\exp\left(2(a-x-bT)b\right)\Phi\left(-b\sqrt{r-T}-\frac{a-x-bT}{\sqrt{r-T}}\right)\right]+1-\Phi\left(\frac{a-x-bT}{
\sqrt{r-T}}-b\sqrt{r-T}\right)\\
&\qquad+\exp\left(2(a-x-bT)b\right)\Phi\left(-b\sqrt{r-T}-\frac{a-x-bT}{\sqrt{r-T}}\right).
\end{split}
\end{equation}
If $x\geq a-bT$ then the probability is $1$.
\end{teorema}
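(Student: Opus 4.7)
The strategy is to split the crossing event at time $T$ and exploit the Markov property at $T$ in order to reduce each piece to a formula already proved. Set
\begin{equation*}
A=\bigcup_{0\le t\le T}\{W_t\ge a-bt\}\quad\text{and}\quad B=\bigcup_{T\le t\le r}\{W_t\ge a-bt\},
\end{equation*}
so that the target event is $A\cup B$. Because $A$ is measurable with respect to $(W_s)_{0\le s\le T}$ and $B$ with respect to $(W_s)_{T\le s\le r}$, the Markov property of Brownian motion makes $A$ and $B$ conditionally independent given $W_T=x$, whence
\begin{equation*}
P(A\cup B\mid W_T=x)=P(A\mid W_T=x)+\bigl[1-P(A\mid W_T=x)\bigr]\,P(B\mid W_T=x).
\end{equation*}

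Now I would identify each factor. For $P(A\mid W_T=x)$, the standing hypotheses $a>0$ and $x<a-bT$ place us in the scope of Theorem \ref{S/A}\textbf{ a)}, yielding $P(A\mid W_T=x)=\exp(-2a(a-x-bT)/T)$. For $P(B\mid W_T=x)$ I would shift time by setting $\tilde W_s:=W_{T+s}-x$ for $s\in[0,r-T]$; conditionally on $\{W_T=x\}$ this is a standard Brownian motion starting from zero. Writing $\alpha:=a-bT-x>0$, the event $B$ becomes $\bigcup_{0\le s\le r-T}\{\tilde W_s\ge\alpha-bs\}$, which is a crossing of a linear boundary with positive intercept $\alpha$ and slope $-b$ over the horizon $r-T$. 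Applying Proposition \ref{clasico}\textbf{ ii)} with these parameters (replacing the slope $b$ by $-b$ and the intercept $a$ by $\alpha$) then gives
\begin{equation*}
P(B\mid W_T=x)=1-\Phi\!\left(\tfrac{\alpha}{\sqrt{r-T}}-b\sqrt{r-T}\right)+\exp(2\alpha b)\,\Phi\!\left(-b\sqrt{r-T}-\tfrac{\alpha}{\sqrt{r-T}}\right).
\end{equation*}

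Inserting the two expressions into the decomposition and expanding $\beta+(1-\beta)(1-u+w)=\beta(u-w)+1-u+w$, where $\beta=\exp(-2a\alpha/T)$ and $u,w$ are the two $\Phi$-terms appearing above, reproduces in order the four summands of \eqref{Tmenor}. The degenerate case $x\ge a-bT$ is immediate, since conditionally on $\{W_T=x\}$ we already have $W_T\ge a-bT$, so $t=T$ itself belongs to the crossing set and the probability equals $1$. I do not anticipate a deep obstacle; the only care required is sign bookkeeping when applying Proposition \ref{clasico}\textbf{ ii)} with negative slope $-b$, so that the exponent $-2ab$ stated there becomes $+2\alpha b$ here—precisely the kind of sign issue highlighted in the footnote attached to Proposition \ref{clasico}.
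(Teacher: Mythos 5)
Your proposal is correct and follows essentially the same route as the paper: split the crossing event at time $T$, use the Markov property to factor the joint probability (your identity $P(A\cup B)=P(A)+[1-P(A)]P(B)$ is just the inclusion–exclusion plus conditional independence step written in one line), evaluate the pre-$T$ piece by Theorem \ref{S/A} a) and the post-$T$ piece by time-shifting and Proposition \ref{clasico} ii) with slope $-b$. The sign bookkeeping in the exponent $\exp(2(a-x-bT)b)$ and the handling of the degenerate case $x\ge a-bT$ both match the paper's argument.
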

\begin{proof}
First, we observe that 
\begin{equation}
\label{unio}
\begin{split}
P&\left(\bigcup_{0\leq t \leq r}\{W_t\geq a-bt\}| W_T=x\right)\\
&\qquad= P\left(\bigcup_{0\leq t\leq T}\left\{W_t\geq a-bt\right\}|W_T=x\right)+P\left(\bigcup_{T \leq t\leq r}\left\{W_t\geq a-bt\right\}|W_T=x\right)\\
&\qquad \quad-P\left(\bigcup_{0\leq t\leq T}\left\{W_t\geq a-bt\right\}, \bigcup_{T \leq t\leq r}\left\{W_t\geq a-bt\right\} | W_T=x\right).
\end{split}
\end{equation}
Now, we use the Markov property and invariance properties of Brownian motion to calculate the joint probability in the last term of \eqref{unio}.
\begin{equation}
\label{MP}
\begin{split}
&\qquad P\left(\bigcup_{0\leq t\leq T}\left\{W_t\geq a-bt\right\}, \bigcup_{T \leq t\leq r}\left\{W_t\geq a-bt\right\} | W_T=x\right)\\
&\qquad =P\left(\bigcup_{0\leq t\leq T}\left\{W_t\geq a-bt\right\}| W_T=x\right)P\left(\bigcup_{T \leq t\leq r}\left\{W_t\geq a-bt\right\} | W_T=x\right)\\
&\qquad =P\left(\bigcup_{0\leq t\leq T}\left\{W_t\geq a-bt\right\}| W_T=x\right)P\left(\bigcup_{0\leq t\leq r-T}\left\{W_t\geq a-bT-x-bt\right\}\right)\\
&\qquad =P\left(\bigcup_{0\leq t\leq T}\left\{W_t\geq a-bt\right\}| W_T=x\right)\left[1-\Phi\left(\frac{a-x-bT}{\sqrt{r-T}}-b\sqrt{r-T}\right)\right.\\
&\qquad\qquad\qquad \qquad\qquad\left.+\exp\left(2(a-x-bT)b\right)\Phi\left(-b\sqrt{r-T}-\frac{a-x-bT}{\sqrt{r-T}}\right)\right],
\end{split}
\end{equation}
where the last equality in \eqref{MP} is consequence of case \textbf{ii)} of Proposition \ref{clasico}. Therefore, from \eqref{unio} and \eqref{MP} we get
\begin{equation*}
\begin{split}
P&\left(\bigcup_{0\leq t \leq r}\{W_t\geq a-bt\}| W_T=x\right)\\
&\qquad =P\left(\bigcup_{0\leq t\leq T}\left\{W_t\geq a-bt\right\}|W_T=x\right)\left[\Phi\left(\frac{a-x-bT}{\sqrt{r-T}}-b\sqrt{r-T}\right)\right.\\
&\qquad\qquad\qquad \qquad\qquad\left.-\exp\left(2(a-x-bT)b\right)\Phi\left(-b\sqrt{r-T}-\frac{a-x-bT}{\sqrt{r-T}}\right)\right]\\
&\qquad \quad+1-\Phi\left(\frac{a-x-bT}{\sqrt{r-T}}-b\sqrt{r-T}\right)+\exp\left(2(a-x-bT)b\right)\Phi\left(-b\sqrt{r-T}-\frac{a-x-bT}{\sqrt{r-T}}\right).
\end{split}
\end{equation*}
Finally, we use equality \eqref{bb} to obtain
\begin{equation*}
\begin{split}
P&\left(\bigcup_{0\leq t \leq r}\{W_t\geq a-bt\}| W_T=x\right)\\
&\qquad=\exp\left(-2a(a-x-bT)/T\right)\left[\Phi\left(\frac{a-x-bT}{\sqrt{r-T}}-b\sqrt{r-T}\right)\right.\\
&\qquad\qquad\qquad \qquad\qquad\left.-\exp\left(2(a-x-bT)b\right)\Phi\left(-b\sqrt{r-T}-\frac{a-x-bT}{\sqrt{r-T}}\right)\right]\\
&\qquad \quad+1-\Phi\left(\frac{a-x-bT}{\sqrt{r-T}}-b\sqrt{r-T}\right)+\exp\left(2(a-x-bT)b\right)\Phi\left(-b\sqrt{r-T}-\frac{a-x-bT}{\sqrt{r-T}}\right).
\end{split}
\end{equation*}

For the case $x\geq a-bT$ it is satisfied that
\begin{equation*}
\begin{split}
P&\left(\bigcup_{0\leq t\leq T}\left\{W_t\geq a-bt\right\}, \bigcup_{T \leq t\leq r}\left\{W_t\geq a-bt\right\} | W_T=x\right)\\&\qquad\qquad=P\left(\bigcup_{0\leq t\leq T}\left\{W_t\geq a-bt\right\}| W_T=x\right)
\end{split}
\end{equation*}
and in consequence we get that the probability \eqref{unio} is equal to $1$. This finishes the proof. 
%.
\end{proof}

\begin{obs}
If $r\to T^+$ in expression \eqref{Tmenor}, we readily obtain the result \eqref{bb}.
\end{obs}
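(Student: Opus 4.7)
The plan is a direct limit computation: rewrite \eqref{Tmenor} with the shorthand $\alpha := a-x-bT$ and $\delta := \sqrt{r-T}$, and let $\delta \to 0^+$. In the non-trivial regime $a>0$ and $x<a-bT$ we have $\alpha>0$ fixed, so the two arguments of $\Phi$ appearing throughout \eqref{Tmenor}, namely $\alpha/\delta - b\delta$ and $-b\delta - \alpha/\delta$, diverge to $+\infty$ and $-\infty$ respectively. By continuity of $\Phi$ at $\pm\infty$,
$$\Phi\!\left(\alpha/\delta - b\delta\right) \longrightarrow 1, \qquad \Phi\!\left(-b\delta - \alpha/\delta\right) \longrightarrow 0.$$

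Substituting these limits into \eqref{Tmenor}, the bracketed expression collapses to $1 - \exp(2\alpha b)\cdot 0 = 1$, while the three trailing additive terms reduce to $1 - 1 + \exp(2\alpha b)\cdot 0 = 0$. What remains is exactly the prefactor $\exp(-2a\alpha/T) = \exp(-2a(a-x-bT)/T)$, which is the right-hand side of \eqref{bb}. For the complementary range $x \geq a-bT$, Theorem \ref{teotr} already declares the probability equal to $1$, and \eqref{bb} likewise gives $1$ in this regime (its hypothesis $a-bT>x$ fails), so consistency at the boundary is immediate without additional computation.

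The only step demanding a moment of care is the cross term $\exp(2\alpha b)\,\Phi(-b\delta - \alpha/\delta)$: even though $b$ may be negative, the factor $\exp(2\alpha b)$ is independent of $r$ and therefore remains a fixed finite constant as $\delta\to 0^+$, so the product is dominated by the vanishing $\Phi$. Apart from noting this, the verification is purely an asymptotic evaluation of two standard normal tails and presents no genuine obstacle.
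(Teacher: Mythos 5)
Your verification is correct and is exactly the computation the paper leaves implicit in this remark: with $\alpha=a-x-bT>0$ fixed and $\delta=\sqrt{r-T}\to 0^+$, the two normal-distribution arguments tend to $\pm\infty$, the bracket tends to $1$, the trailing terms cancel, and only the prefactor $\exp(-2a(a-x-bT)/T)$ of \eqref{bb} survives; the boundary regime $x\geq a-bT$ matches trivially. No gaps.
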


\begin{obs}
Theorem \ref{teotr} is equivalent to a particular case of the result presented  in \cite[Theorem 2.4]{abundo}.
\end{obs}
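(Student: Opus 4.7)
The plan is to exhibit Theorem \ref{teotr} as a direct specialization of Abundo's Theorem 2.4 in \cite{abundo}, which gives a closed form for the conditional probability that a standard Brownian motion crosses a piecewise-linear barrier on $[0,r]$ given its value $W_T = x$ at an intermediate time. Since the probabilistic content of \eqref{Tmenor} has already been derived in the preceding proof, the remark reduces to a symbolic comparison between the two formulas; no new stochastic arguments are required.

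First, I would recall the formulation of \cite[Theorem 2.4]{abundo}: given knots $0 = t_0 < t_1 < \cdots < t_n = r$, a piecewise-linear barrier $g(t) = a_i + b_i t$ on $[t_{i-1}, t_i]$, and a conditioning time $T$ coinciding with some knot $t_k$, Abundo expresses the conditional crossing probability as a combination of Gaussian tail probabilities weighted by exponential reflection factors from the Doob transform of the bridge on $[0,T]$ and of the free Brownian increment on $[T,r]$. Next, I would specialize to $n=2$, $t_1 = T$, $a_1 = a_2 = a$ and $b_1 = b_2 = -b$, so that the barrier degenerates to a single line $g(t) = a - bt$ but keeps an artificial knot at $T$ so that Abundo's conditioning point falls on the knot lattice as required by his statement.

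Third, I would substitute this specialization into Abundo's formula and collect terms according to the decomposition \eqref{unio}. The factor $\exp(-2a(a-x-bT)/T)$ multiplying the bracket in \eqref{Tmenor} should be produced by Abundo's reflection weight on $[0,T]$, which, in this collapsed case, is exactly the unconditional Brownian-bridge crossing probability \eqref{bb} of Theorem \ref{S/A}. The bracketed expression involving $\Phi\bigl((a-x-bT)/\sqrt{r-T}-b\sqrt{r-T}\bigr)$ and the overshoot correction $\exp(2(a-x-bT)b)\,\Phi\bigl(-b\sqrt{r-T}-(a-x-bT)/\sqrt{r-T}\bigr)$ should appear as Abundo's contribution from the segment $[T,r]$, matching case \textbf{ii)} of Proposition \ref{clasico} shifted to the initial point $(T,x)$. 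The three additive terms outside the bracket are then the inclusion-exclusion residuals corresponding to the second and third summands in \eqref{unio}, again recovered from Abundo's expression after the product structure enforced by the Markov property at $T$ is expanded.

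The main obstacle is purely notational: Abundo uses a different parametrization for the intercepts and slopes, and his formula includes a product of exponential reflection factors that can collapse in a delicate way when two adjacent slopes agree, which is precisely our degenerate situation. Care is required to verify that this coincidence of slopes at the fake knot $T$ produces a single factor $\exp(-2a(a-x-bT)/T)$ rather than a double-counted one, and that the signs of $b$ in Abundo's convention align with ours. Once the dictionary between the two parametrizations is fixed, the equivalence follows by term-by-term inspection.
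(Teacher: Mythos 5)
The paper states this as a bare remark and gives no proof, so there is no in-text argument to compare yours against; your plan is the natural (essentially the only) way to substantiate the claim, and it is sound. Specializing Abundo's two-piece boundary to a degenerate corner at $T$ with equal intercepts and slopes, and then matching terms against \eqref{Tmenor}, is exactly right: the product/inclusion--exclusion structure you describe is the same one the paper itself uses in \eqref{unio} and \eqref{MP} to prove Theorem \ref{teotr}, so both formulas are assembled from identical ingredients, namely the bridge crossing weight \eqref{bb} on $[0,T]$ and the free crossing probability of case \textbf{ii)} of Proposition \ref{clasico} on $[T,r]$ started from $(T,x)$. The only substantive content of the verification is the bookkeeping you flag yourself --- aligning Abundo's sign convention for the slope with the paper's $a-bt$, and checking that the reflection factor on $[0,T]$ appears exactly once as $\exp\left(-2a(a-x-bT)/T\right)$ when the two slopes coincide --- and you have identified these correctly.
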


Finally, we are ready to prove the main result of the paper. To simplify the notation we define for $x\in\mathbb{R}$
\begin{equation*}
R(t,x)=\frac{1}{c_2(p-1)\left(g(x)\right)^{p-1}}-\frac{c_1}{c_2}t.
\end{equation*}

\begin{proof}[Proof of the Theorem \ref{FDdetau}]
Let $r\geq 0$. Then
\begin{align}\notag
P(\tau\leq r)&=\int_{-\infty}^{\infty}P\left(\inf_{0\leq t}\left\{W_t\geq R(t,W_T)\right\}\leq r | W_{T}=x\right)P\left(W_T \in dx\right)\\\notag
&=\int_{-\infty}^{\infty}P\left(\inf_{0\leq t}\left\{W_t\geq R(t,x)\right\}\leq r | W_{T}=x\right)P\left(W_T \in dx\right)\\\label{Dis1}
&=\int_{-\infty}^{\infty}P\left(\bigcup_{t\in[0,r]}\left\{W_t\geq R(t,x)\right\}| W_{T}=x\right)P\left(W_T \in dx\right).
\end{align}

The second equality in \eqref{Dis1} is consequence of a well-known result, see \cite[Property CE10, pp. 462]{pfeiffer}. 

First, we suppose that $r<T$. From case \textbf{b)} of Theorem \ref{S/A} we have
\begin{equation}
\label{casobT32}
\begin{split}
P&\left(\bigcup_{t\in[0,r]}\left\{W_t\geq R(t,x)\right\}| W_{T}=x\right)\\
&=1-\Phi\left(\frac{R(T,x)-x}{T\sqrt{v}}+\frac{\sqrt{v}}{c_2(p-1)\left(g(x)\right)^{p-1}}\right)\\
&\quad +\exp\left(\frac{-2\left(R(T,x)-x\right)}{Tc_2(p-1)\left(g(x)\right)^{p-1}}\right)\Phi\left(\frac{R(T,x)-x}{T\sqrt{v}}-\frac{\sqrt{v}}{c_2(p-1)\left(g(x)\right)^{p-1}}\right).
\end{split}
\end{equation}
In other hand, the random variable $W_T\sim \phi(0,T)$ and note that $a(x)=R(T,x)-x$. Therefore replacing equation \eqref{casobT32} in the last equality in \eqref{Dis1} we get the case \textbf{i)}, i.e.,
\begin{align}\notag
P&(\tau\leq r)\\\notag
&=1-\int_{-\infty}^{\infty}\Phi\left(\frac{a(x)}{T\sqrt{v}}+\frac{\sqrt{v}}{c_2(p-1)\left(g(x)\right)^{p-1}}\right)\phi(0,T)dx\\\notag
&+\int_{-\infty}^{\infty}\exp\left(\frac{-2a(x)}{Tc_2(p-1)\left(g(x)\right)^{p-1}}\right)\Phi\left(\frac{a(x)}{T\sqrt{v}}-\frac{\sqrt{v}}{c_2(p-1)\left(g(x)\right)^{p-1}}\right)\phi(0,T)dx.
\end{align}

Now, we assume that $T<r$. From Theorem \ref{teotr}, if $x<R(T,x)$, we obtain
\begin{equation}
\label{casobTotro}
\begin{split}
P&\left(\bigcup_{t\in[0,r]}\left\{W_t\geq R(t,x)\right\}| W_{T}=x\right)\\
&\quad=\exp\left(\frac{-2\left(R(T,x)-x\right)}{Tc_2(p-1)\left(g(x)\right)^{p-1}}\right)\left[\Phi\left(\frac{R(T,x)-x}{\sqrt{r-T}}-\frac{c_1\sqrt{r-T}}{c_2}\right)\right.\\
&\qquad \qquad\left.-\exp\left(\frac{2\left(R(T,x)-x\right)c_1}{c_2}\right)\Phi\left(\frac{-R(T,x)+x}{\sqrt{r-T}}-\frac{c_1\sqrt{r-T}}{c_2}\right)\right]\\
&\qquad +1-\Phi\left(\frac{R(T,x)-x}{\sqrt{r-T}}-\frac{c_1\sqrt{r-T}}{c_2}\right)\\
&\qquad+\exp\left(\frac{2\left(R(T,x)-x\right)c_1}{c_2}\right)\Phi\left(\frac{-R(T,x)+x}{\sqrt{r-T}}-\frac{c_1\sqrt{r-T}}{c_2}\right)
\end{split}
\end{equation}
and
\begin{equation}
\label{Totro2}
P\left(\bigcup_{t\in[0,r]}\left\{W_t\geq R(t,x)\right\}| W_{T}=x\right)=1
\end{equation}
if $x\geq R(T,x)$. Therefore we separate the integral in expression \eqref{Dis1}
\begin{equation*}
\label{Dis4}
\begin{split}
P(\tau\leq r)&=\\
&=\int_{-\infty}^{\infty}P\left(\bigcup_{t\in[0,r]}\left\{W_t\geq R(t,x)\right\}| W_{T}=x\right)I_{\{x<R(T,x)\}}\phi(0,T)dx\\
&\quad+\int_{-\infty}^{\infty}I_{\{x\geq R(T,x)\}}\phi(0,T)dx,
\end{split}
\end{equation*}
and replacing expressions \eqref{casobTotro} and \eqref{Totro2} in the above equality to get
\begin{equation*}
\label{Dis3}
\begin{split}
&P(\tau\leq r)\\
&=\int_{-\infty}^{\infty}\exp\left(\frac{-2a(x)}{Tc_2(p-1)\left(g(x)\right)^{p-1}}\right)\Phi\left(\frac{a(x)}{\sqrt{r-T}}-\frac{c_1\sqrt{r-T}}{c_2}\right)I_{\{x<R(T,x)\}}\phi(0,T)dx\\
&\quad-\int_{-\infty}^{\infty}\exp\left(\frac{-2a(x)}{Tc_2(p-1)\left(g(x)\right)^{p-1}}+\frac{2a(x)c_1}{c_2}\right)\\
&\hspace{4.8cm}\times\Phi\left(\frac{-a(x)}{\sqrt{r-T}}-\frac{c_1\sqrt{r-T}}{c_2}\right)I_{\{x<R(T,x)\}}\phi(0,T)dx\\
&\quad+1-\int_{-\infty}^{\infty}\Phi\left(\frac{a(x)}{\sqrt{r-T}}-\frac{c_1\sqrt{r-T}}{c_2}\right)I_{\{x<R(T,x)\}}\phi(0,T)dx\\
&\quad+\int_{-\infty}^{\infty}\exp\left(\frac{2a(x)c_1}{c_2}\right)\Phi\left(\frac{-a(x)}{\sqrt{r-T}}-\frac{c_1\sqrt{r-T}}{c_2}\right)I_{\{x<R(T,x)\}}\phi(0,T)dx,
\end{split}
\end{equation*}
where the case \textbf{iii)} is obtained using that $a(x)=R(T,x)-x$. Finally, for the case \textbf{ii)}, we compute the limit when $r\to T$. Dominated Convergence Theorem and the continuity of the standard normal distribution imply 
\begin{equation}
\begin{split}
\lim&_{r\to T^{-}}\left\{1-\int_{-\infty}^{\infty}\Phi\left(\frac{a(x)}{T\sqrt{v}}+\frac{\sqrt{v}}{c_2(p-1)\left(g(x)\right)^{p-1}}\right)\phi(0,T)dx\right.\\\notag
&+\left.\int_{-\infty}^{\infty}\exp\left(\frac{-2a(x)}{Tc_2(p-1)\left(g(x)\right)^{p-1}}\right)\Phi\left(\frac{a(x)}{T\sqrt{v}}-\frac{\sqrt{v}}{c_2(p-1)\left(g(x)\right)^{p-1}}\right)\phi(0,T)dx\right\}\\
=&1-\int_{-\infty}^{\infty}\Phi\left(\infty\right)I_{\{a(x)>0\}}\phi(0,T)dx\\
&+\int_{-\infty}^{\infty}\exp\left(\frac{-2a(x)}{Tc_2(p-1)\left(g(x)\right)^{p-1}}\right)\Phi\left(\infty\right)I_{\{a(x)>0\}}\phi(0,T)dx\\
=&\lim_{r\to T^{+}}\left\{\int_{-\infty}^{\infty}\exp\left(\frac{-2a(x)}{Tc_2(p-1)\left(g(x)\right)^{p-1}}\right)\Phi\left(\frac{a(x)}{\sqrt{r-T}}-\frac{c_1\sqrt{r-T}}{c_2}\right)I_{\{a(x)>0\}}\phi(0,T)dx\right.\\
&\quad-\int_{-\infty}^{\infty}\exp\left(\frac{-2a(x)}{Tc_2(p-1)\left(g(x)\right)^{p-1}}+\frac{2a(x)c_1}{c_2}\right)\\
&\hspace{4.7cm}\times\Phi\left(\frac{-a(x)}{\sqrt{r-T}}-\frac{c_1\sqrt{r-T}}{c_2}\right)I_{\{a(x)>0\}}\phi(0,T)dx\\
&\quad+1-\int_{-\infty}^{\infty}\Phi\left(\frac{a(x)}{\sqrt{r-T}}-\frac{c_1\sqrt{r-T}}{c_2}\right)I_{\{a(x)>0\}}\phi(0,T)dx\\
&\quad\left.+\int_{-\infty}^{\infty}\exp\left(\frac{2a(x)c_1}{c_2}\right)\Phi\left(\frac{-a(x)}{\sqrt{r-T}}-\frac{c_1\sqrt{r-T}}{c_2}\right)I_{\{a(x)>0\}}\phi(0,T)dx\right\}.
\end{split}
\end{equation}
The above concludes the proof.
\end{proof}

\begin{obs}
Using the case \textbf{a)} of Theorem \ref{S/A} in equation \eqref{Dis1} we also get the probability \textbf{ii)} in Theorem \ref{FDdetau}.
\end{obs}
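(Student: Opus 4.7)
The plan is to condition on the terminal value $W_T = x$, which transforms the event $\{\tau \leq r\}$ --- originally involving the \emph{random} boundary $R(t, W_T)$ --- into the event $\bigcup_{0 \leq t \leq r}\{W_t \geq R(t, x)\}$ under the conditional law of $W$ given $W_T = x$. This is precisely the setting of Theorems \ref{S/A} and \ref{teotr}, which already compute the relevant crossing probabilities for linear barriers. Writing the total probability integral
\begin{equation*}
P(\tau \leq r) = \int_{-\infty}^{\infty} P\!\left(\bigcup_{0 \leq t \leq r} \{W_t \geq R(t, x)\} \,\Big|\, W_T = x\right) \phi(0, T)\, dx
\end{equation*}
thus reduces the problem to substituting in the correct crossing formula from the previous two theorems.

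First I match parameters: the boundary $R(t, x) = a - bt$ has $a = 1/[c_2(p-1)(g(x))^{p-1}]$ (which is always positive, since $g > 0$ and $p > 1$) and $b = c_1/c_2 > 0$, and by construction $a - x - bT = a(x)$. The hypothesis $a > 0$ demanded by both theorems is therefore automatic, and the condition $a(x) > 0$ is equivalent to $x < a - bT$, which is the non-trivial regime of Theorem \ref{teotr}.

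Next I split into the three regimes $r < T$, $r = T$, and $r > T$. For $r < T$, part (b) of Theorem \ref{S/A} applies for every $x$ (its formula incorporates the dependence on $a(x)$ continuously, so no indicator is needed at this stage), and substituting the expressions for $a$, $b$, and $a(x)$ into \eqref{bbcp2c} gives (i) after integrating against $\phi(0, T)$. For $r > T$, Theorem \ref{teotr} provides the conditional probability on $\{x < R(T, x)\} = \{a(x) > 0\}$ and yields $1$ on the complement; splitting the outer integral accordingly, using $\int \phi(0, T)\, dx = 1$ to produce the ``free'' $+1$ term, and rearranging, yields (iii). For the boundary case $r = T$, one may either apply part (a) of Theorem \ref{S/A} directly (it already carries the indicator $\mathbf{1}_{\{a(x) > 0\}}$) or pass to the limit $r \to T^\pm$ in (i) or (iii) by dominated convergence, using $\Phi(\pm\infty) \in \{0, 1\}$; both routes produce (ii).

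The main difficulty is bookkeeping rather than analysis: one must carefully track when $a(x)$ changes sign and how the indicator $\mathbf{1}_{\{a(x) > 0\}}$ appears or disappears between the three regimes, and identify the stand-alone $+1$ in (iii) with $\int_{\{a(x) \leq 0\}} \phi(0, T)\, dx + \int_{\{a(x) > 0\}} \phi(0, T)\, dx$. Consistency of the three cases as $r \to T^\pm$ --- already anticipated in the Remarks following Theorems \ref{S/A} and \ref{teotr} --- provides a convenient internal sanity check, reducing to straightforward pointwise limit computations on the integrands.
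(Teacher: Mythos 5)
Your proposal is correct and follows the route the remark itself indicates: condition on $W_T=x$ in \eqref{Dis1}, apply part \textbf{a)} of Theorem \ref{S/A} with $a=1/[c_2(p-1)(g(x))^{p-1}]>0$, $b=c_1/c_2$, so that $a-x-bT=a(x)$, and convert the contribution of $\{a(x)\le 0\}$ (where the conditional probability equals $1$) into the terms $1-\int_{-\infty}^{\infty} I_{\{a(x)>0\}}\phi(0,T)\,dx$. This is precisely the direct derivation of case \textbf{ii)} that the remark asserts, matching the paper's intent (the paper's proof of Theorem \ref{FDdetau} instead obtains \textbf{ii)} by the limit $r\to T^{\pm}$, which you also note as the alternative).
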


%%%%%%%%%%%%%%%%%%%%%%%%%%%%%%%%%%%%%%%%%%%%%%%%%%%%%%%%%%
%%%%%%%%%%%%%%% Notas y conclusiones %%%%%%%%%%%%%%%%%%%%%%%%%%%%%%%%%%
%%%%%%%%%%%%%%%%%%%%%%%%%%%%%%%%%%%%%%%%%%%%%%%%%%%%%%%%%%
\section{Further remarks and open problems}
We devote this section to present some conclusion remarks and further discussion. 
\begin{itemize}

\item In this paper, we have considered a stochastic model for fatigue in which the initial condition is given by a non-adapted random variable to the original filtration. Although the tools of anticipating calculus have been extensively used in finance applications (see, e.g., \cite{jafari}, \cite{kohatsu1}, \cite{kohatsu}, \cite{ReylaLN}), to the best of the author knowledge, it has been rarely studied outside this context. Nevertheless, works as \cite{mannella} and references within show that studying the anticipating case in certain experiments of physics may yield results that approximate better the actual phenomena.  Here, we have presented only a first attempt in incorporating anticipating tools to the study of fatigue models. In practice, this would represent that the dominant crack is being externally affected at a certain time $T > 0$ and the knowledge of this event is known and taken into account from the very beginning at $t=0$. 

\item In this paper the distribution function of $\tau$, the time of the ultimate damage,  is provided. Accordingly, this work can be considered as an interesting application of the explosion phenomenon in anticipating SDE which is currently an area of very fruitful research. In fact, to the best of our knowledge, there exists only few works where the law of the blow up time of non anticipating SDE is studied (the interested reader can consult, for instance \cite{feller}, \cite{Kara}, \cite{Llv}). In the case of anticipating SDE, the results are far from being completed and this work represent a further step in this direction.

\item For models with explosion is relevant to have numerical schemes that reproduce this behavior. For the case of non-anticipating SDE with coefficients without dependence of the time (where the model  \eqref{PE} is included) in \cite{davila} the authors use the Euler-Maruyama scheme to simulate solutions of SDE with explosion. Indeed, they select the step time of the method base on the Osgood test for explosion (\cite{LL}, \cite{os}), i.e., 
\begin{equation*}
T_k=\frac{h}{b(X_k)}\quad\text{for}\quad h>0,
\end{equation*}
in equation
\begin{equation*}
X_{k+1}=X_k+T_kb(X_k)+\sigma(X_k)\Delta W_k,
\end{equation*}
where $b,\sigma$ are the drift and diffusion coefficients respectively. However, as far as we know, the techniques and tools used to work with no-adapted stochastic integrals cannot be carried out easily to the Euler schemes, therefore, there are only few works related on  (see for example \cite{arturoeuler},  \cite{soledad}), in consequence, this seems as an interesting and challenging problem from the mathematical point of view.

\end{itemize}
%%%%%%%%%%%%%%%%%%%%%%%%%%%%%%%%%%%%%%%%%%%%%%%%%%%%%%%%%%
%%%%%%%%%%%%%%% BIBLIOGRAPHY %%%%%%%%%%%%%%%%%%%%%%%%%%%%%%%%%%
%%%%%%%%%%%%%%%%%%%%%%%%%%%%%%%%%%%%%%%%%%%%%%%%%%%%%%%%%%
%\bibliographystyle{siam}
%\bibliography{references}

\end{document}